\newtheorem{theorem}{Theorem}[section]
\newtheorem{lemma}[theorem]{Lemma}
\newtheorem{proposition}[theorem]{Proposition}
\newtheorem{corollary}[theorem]{Corollary}
\theoremstyle{definition}
\numberwithin{equation}{section}
\begin{document}

\baselineskip=15.5pt

\title[On the K\"ahler structures over Quot schemes, II]{On the K\"ahler structures
over Quot schemes, II}

\author[I. Biswas]{Indranil Biswas}

\address{School of Mathematics, Tata Institute of Fundamental
Research, Homi Bhabha Road, Bombay 400005, India}

\email{indranil@math.tifr.res.in}

\author[H. Seshadri]{Harish Seshadri}

\address{Indian Institute of Science, Department of Mathematics,
Bangalore 560003, India}

\email{harish@math.iisc.ernet.in}

\subjclass[2000]{14H60, 32Q10, 14H81}

\keywords{Quot scheme, holomorphic bisectional curvature, automorphism, Albanese map}

\date{}

\begin{abstract}
Let $X$ be a compact connected Riemann surface of genus $g$, with
$g\, \geq\, 2$, and let ${\mathcal O}_X$ denote the sheaf of holomorphic functions on $X$. Fix
positive integers $r$ and $d$ and let ${\mathcal Q}(r,d)$ be the Quot scheme
parametrizing all torsion coherent quotients of ${\mathcal O}^{\oplus r}_X$ of degree $d$. We 
prove that ${\mathcal Q}(r,d)$ does not admit a K\"ahler metric whose holomorphic 
bisectional curvatures are all nonnegative.
\end{abstract}

\maketitle

\section{Introduction}

Let $X$ be a compact connected Riemann surface of genus $g$, with $g\, \geq\, 2$. For
any positive integer $d$, let $S^d(X)$ denote the $d$-fold symmetric product of $X$.
The main theorem of \cite{BR} says the following (see \cite[Theorem 1.1]{BR}):
If $d\, \leq\, 2(g-1)$, then $S^d(X)$ does not admit any K\"ahler
metric for which all the holomorphic bisectional curvatures are nonnegative.

A natural generalization of the symmetric product $S^d(X)$ is the Quot scheme
${\mathcal Q}(r,d)$ that parametrizes all torsion coherent
quotients of ${\mathcal O}^{\oplus r}_X$ of degree $d$. So, ${\mathcal Q}(1,d)\,
=\, S^d(X)$. These Quot schemes arise
naturally in the study of vector bundles on curves (see \cite{BGL}). They also
appear in mathematical physics as moduli spaces of vortices (cf. \cite{Ba},
\cite{BiR}, \cite{BR}).

Our aim here is to prove the following (see Theorem \ref{thm1}):

\textit{The complex manifold ${\mathcal Q}(r,d)$ does not admit any K\"ahler
metric such that all the holomorphic bisectional curvatures are nonnegative.}

In \cite{BS}, this was proved under the assumption that $d\, \leq \, 2(g-1)$.

\section{The Albanese map for $\mathcal Q$}\label{se2}

Let $X$ be a compact connected Riemann surface of genus $g$, with
$g\, \geq\, 2$. The sheaf of germs of holomorphic functions on $X$ will be denoted
by ${\mathcal O}_X$. Fix positive integers $r$ and $d$. Let
$$
{\mathcal Q}\, :=\, {\mathcal Q}(r,d)
$$
be the Quot scheme that parametrizes all torsion coherent quotients of ${\mathcal
O}^{\oplus r}_X$ of degree $d$. In other words, ${\mathcal Q}$ parametrizes all
coherent subsheaves of ${\mathcal O}^{\oplus r}_X$ of rank $r$ and degree $-d$.

The group of permutations of $\{1\, ,\cdots\, ,d\}$ will be denoted by
$P_d$. Let $S^d(X)\,:=\, X^d/P_d$ be the symmetric product of $X$. The
Quot scheme ${\mathcal Q}(1,d)$ is identified with $S^d(X)$ by sending
any quotient $Q'\, \in\, {\mathcal Q}(1,d)$ of ${\mathcal O}_X$ to the
scheme-theoretic support of $Q'$. Take any quotient $Q\, \in\,
{\mathcal Q}\, =\, {\mathcal Q}(r,d)$. Consider the corresponding short
exact sequence
$$
0\, \longrightarrow\, {\mathcal K}_Q \, \longrightarrow\,
{\mathcal O}^{\oplus r}_X\, \longrightarrow\, Q\, \longrightarrow\, 0
$$
on $X$. Let
$$
0\, \longrightarrow\, \bigwedge\nolimits^r {\mathcal K}_Q \, \longrightarrow\,
\bigwedge\nolimits^r {\mathcal O}^{\oplus r}_X\,=\, {\mathcal O}_X
\, \longrightarrow\, (\bigwedge\nolimits^r {\mathcal O}^{\oplus r}_X)/
(\bigwedge\nolimits^r {\mathcal K}_Q)\, :=\, Q'\, \longrightarrow\, 0
$$
be the short exact sequence obtained from it by taking $r$-th exterior
product. Sending any $Q\, \in\, {\mathcal Q}$ to $Q'\, \in\, {\mathcal Q}(1,d)$
constructed as above from $Q$ we get a morphism
\begin{equation}\label{e1}
f\, :\, {\mathcal Q}\,\longrightarrow\, S^d(X)\,=\,
{\mathcal Q}(1,d)\, .
\end{equation}

\begin{lemma}\label{lem1}
Take any point $z\,\in\, S^d(X)$. There is a sequence of holomorphic maps
ending at the point $z$,
$$
Y^z_d\,\stackrel{p_d}{\longrightarrow}\,
Y^z_{d-1} \,\stackrel{p_{d-1}}{\longrightarrow}\, \cdots
\,\stackrel{p_2}{\longrightarrow}\, Y^z_1
\,\stackrel{p_1}{\longrightarrow}\, Y^z_0 \,=\, \{z\}\, ,
$$
such that
\begin{itemize}
\item all the fibers of each $p_i$ are isomorphic to ${\mathbb C}{\mathbb P}^{r-
1}$, and

\item there is a surjective holomorphic map from $Y^z_d$ to the fiber
$f^{-1}(z)$.
\end{itemize}
\end{lemma}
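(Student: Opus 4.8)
The plan is to understand the fiber $f^{-1}(z)$ explicitly and then build the tower $Y^z_d \to \cdots \to Y^z_0$ by analyzing how a torsion quotient $Q$ is recovered from the data of its support and a filtration. Fix $z \in S^d(X)$ corresponding to the divisor $D = \sum_j n_j x_j$ on $X$ with $\sum_j n_j = d$. A quotient $Q \in f^{-1}(z)$ is a torsion quotient of $\mathcal{O}_X^{\oplus r}$ whose associated line bundle quotient $Q'$ (via the top exterior power) has support exactly $D$. The key structural fact is that the support of $Q$ itself must be contained in $\mathrm{supp}(D) = \{x_1, \dots, x_k\}$, and at each point the local structure of $Q$ is that of a torsion module over the local ring $\mathcal{O}_{X,x_j}$, i.e., a finite-length quotient of $\mathcal{O}_{X,x_j}^{\oplus r}$.

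**First I would** set up the tower by building $Q$ one point of length at a time. The idea is that adding one unit of torsion to a quotient of $\mathcal{O}_X^{\oplus r}$ supported near a fixed point $x \in X$ amounts to choosing a line in a rank-$r$ space — concretely, a length-$\ell$ torsion quotient can be extended to length $\ell+1$ by specifying a quotient line of the fiber $\mathbb{C}^r$ at the relevant point, and the space of such choices is a $\mathbb{P}^{r-1}$. More precisely, I would let $Y^z_i$ parametrize pairs consisting of a length-$i$ torsion quotient of $\mathcal{O}_X^{\oplus r}$ together with compatible flag/ordering data whose support (as a divisor) is a prescribed truncation of $D$, and let $p_i$ be the forgetful map that drops the last incremental piece. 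Each $p_i$ should have fibers $\mathbb{P}^{r-1}$ because the last step is the choice of a hyperplane (equivalently a quotient line) in the fiber $\mathcal{O}_X^{\oplus r}|_{x} \cong \mathbb{C}^r$ over the point at which length is being incremented; the holomorphicity of $p_i$ follows since these are morphisms of the relevant relative Quot/flag schemes.

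**The surjection** $Y^z_d \twoheadrightarrow f^{-1}(z)$ then comes from the terminal object: a point of $Y^z_d$ carries a full length-$d$ torsion quotient with support $D$, which is precisely a point of $f^{-1}(z)$, while the extra filtration/ordering data is forgotten. Surjectivity reduces to showing that every $Q \in f^{-1}(z)$ admits at least one filtration by successive length-one steps with the correct support divisor — this is a standard composition-series argument for finite-length modules, carried out locally at each $x_j$ and patched, using that a finite-length $\mathcal{O}_{X,x_j}$-module always has a composition series refining any prescribed length profile, here pinned to $n_j$.

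**The main obstacle I anticipate** is making the tower genuinely holomorphic and the fibers genuinely isomorphic to $\mathbb{P}^{r-1}$ in a uniform, base-point-independent way, rather than just set-theoretically $\mathbb{P}^{r-1}$. The subtlety is that the ambient line being chosen at each stage lives in the fiber of $\mathcal{O}_X^{\oplus r}$ over a point that itself varies, so one must work with the relative flag-Hilbert or relative Quot scheme over $X$ (or over the appropriate small symmetric product) and verify that the incremental-extension construction is represented by a projective bundle, i.e., a $\mathbb{P}^{r-1}$-bundle, whose fibers are therefore all abstractly isomorphic to $\mathbb{C}\mathbb{P}^{r-1}$. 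Once the tower is identified with such a sequence of projective bundles, both bulleted conclusions follow, and the surjectivity of $Y^z_d \to f^{-1}(z)$ is the comparatively routine existence-of-filtration step.
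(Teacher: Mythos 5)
Your overall strategy---building a tower by adding torsion length one at a time at the points of the divisor $D$, letting $p_i$ forget the last incremental piece, and getting the surjection $Y^z_d \to f^{-1}(z)$ by forgetting the filtration, with surjectivity from a composition-series argument---is the same as the paper's. But there is a genuine error in the step that carries all the content of the lemma: you identify the fiber of $p_i$ with the set of hyperplanes (equivalently quotient lines) of the fiber $\mathcal{O}_X^{\oplus r}|_{x}\cong \mathbb{C}^r$ at the point $x$ where the length is incremented. This is false precisely when $x$ occurs with multiplicity greater than one in $D$, which is the only case in which the lemma is nontrivial (for reduced $D$ the fiber $f^{-1}(z)$ is just $(\mathbb{C}\mathbb{P}^{r-1})^d$). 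Concretely: for $r=1$ and $D=2x$, the unique length-two quotient supported at $x$ is $\mathcal{O}_X/\mathfrak{m}_x^2$, and it cannot be produced by specifying quotient lines of fibers of $\mathcal{O}_X$ twice (the direct sum $\mathbb{C}_x\oplus\mathbb{C}_x$ is not a quotient of $\mathcal{O}_X$); for $r=2$, if the length-one quotient $Q_1$ at $x$ is defined by a line $\lambda$ and you try to extend by that same $\lambda$, the map $\mathcal{O}_X^{\oplus 2}\to Q_1\oplus\mathbb{C}_x$ has image the diagonal and is not surjective. The correct statement is that one-step extensions of $Q_{i-1}$ are in bijection with subsheaves $W_i\subset W_{i-1}:=\ker(\mathcal{O}_X^{\oplus r}\to Q_{i-1})$ with $W_{i-1}/W_i$ of length one at $x_i$, i.e.\ with hyperplanes in the fiber $(W_{i-1})_{x_i}$ of the \emph{kernel} sheaf; when $x_i\in\mathrm{supp}(Q_{i-1})$ the natural map $(W_{i-1})_{x_i}\to(\mathcal{O}_X^{\oplus r})_{x_i}$ is not an isomorphism (it is not even injective), so this projective space cannot be identified with hyperplanes in $(\mathcal{O}_X^{\oplus r})_{x_i}$.

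The fix is exactly the paper's device, which your proposal circles around but never lands on: after fixing an ordering $(x_1,\ldots,x_d)\in X^d$ lifting $z$, parametrize filtrations of the kernels $W_d\subset\cdots\subset W_1\subset W_0=\mathcal{O}_X^{\oplus r}$ rather than incremental data attached to the quotients. Since a subsheaf of a locally free sheaf on a smooth curve with torsion quotient is again locally free of the same rank, each $W_{i-1}$ is a rank-$r$ vector bundle, and the fiber of $p_i$ over a filtration ending in $W_{i-1}$ is the projective space $\mathbb{P}(W_{i-1})_{x_i}$ of hyperplanes in $(W_{i-1})_{x_i}$---abstractly $\mathbb{C}\mathbb{P}^{r-1}$, but not the projectivization of the trivial bundle's fiber. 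This also settles the ``main obstacle'' you anticipate, though your proposed remedy (relative flag-Hilbert or Quot schemes over $X$ or a symmetric product, to handle a ``varying point'') misdiagnoses it: the points $x_1,\ldots,x_d$ are fixed once $z$ is, and what varies along the base $Y^z_{i-1}$ is the kernel bundle $W_{i-1}$; the map $p_i$ is a holomorphic $\mathbb{C}\mathbb{P}^{r-1}$-bundle because $Y^z_i$ is the projectivization over $Y^z_{i-1}$ of the universal kernel bundle evaluated at the fixed point $x_i$.
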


\begin{proof}
Fix a point
$$
(x_1\, ,\cdots\, ,x_d)\,\in\, X^d
$$
that projects to $z$ under the quotient map $X^d\, \longrightarrow\,
X^d/P_d\,=\, S^d(X)$.
For any integer $i\, \in\, [1\, , d]$, let $Y^z_i$ be the space of all
filtrations of coherent analytic subsheaves
\begin{equation}\label{e-1}
W_i\, \subset\, W_{i-1} \, \subset\, \cdots \, \subset\,
W_{1} \, \subset\, W_0\,=\, {\mathcal O}^{\oplus r}_X\, ,
\end{equation}
where $W_j/W_{j+1}$, $0\,\leq\, j\, \leq\, i-1$, is a torsion sheaf
supported at $x_{j+1}$ such that the dimension of the stalk of
$W_j/W_{j+1}$ at $x_{j+1}$ is one.
Therefore, $W_{j+1}$ is a holomorphic vector bundle of rank $r$ and degree
$-j-1$. In particular, $W_i$ is a holomorphic vector bundle of rank $r$ and
degree $-i$. We have the natural forgetful map
$$
p_i \, :\, Y^z_i\, \longrightarrow\, Y^z_{i-1}
$$
that forgets the smallest subsheaf in the filtration (it forgets
$W_i$ in \eqref{e-1}). The inverse
image, under the projection $p_i$, of a point
$$
W_{i-1} \, \subset\, \cdots \, \subset\,
W_{1} \, \subset\, W_0\,=\, {\mathcal O}^{\oplus r}_X
$$
of $Y^z_{i-1}$ is the projective space ${\mathbb P}(W_{i-1})_{x_i}$ that
parametrizes all hyperplanes in the fiber of the vector bundle $W_{i-1}$ over
the point $x_i$. In particular, the map $p_i$ makes $Y^z_i$ a projective bundle
over $Y^z_{i-1}$ of relative dimension $r-1$.

Sending any point
$$
W_d\, \subset\, W_{d-1} \, \subset\, \cdots \, \subset\,
W_{1} \, \subset\, W_0\,=\, {\mathcal O}^{\oplus r}_X\, ,
$$
of $Y^z_d$ to the subsheaf $W_d\, \subset\, {\mathcal O}^{\oplus r}_X$
we get a holomorphic map
$$
Y^z_d\,\longrightarrow\, f^{-1}(z)\, .
$$
This map is clearly surjective.
\end{proof}

Let
\begin{equation}\label{e2}
\varphi\, :\, S^d(X)\,\longrightarrow\, \text{Pic}^d(X)
\end{equation}
be the morphism that sends a divisor to the corresponding holomorphic
line bundle on $X$.

\begin{corollary}\label{cor1}
The composition $\varphi\circ f\, :\, {\mathcal Q}\,\longrightarrow\,
{\rm Pic}^d(X)$ is the Albanese morphism for the variety ${\mathcal Q}$.
\end{corollary}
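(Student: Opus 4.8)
The plan is to reduce the statement to the classical fact that the Abel--Jacobi morphism $\varphi$ of \eqref{e2} is the Albanese morphism of $S^d(X)$ — here ${\rm Pic}^d(X)$ is a torsor under the abelian variety ${\rm Pic}^0(X)$, and $\varphi$ induces the isomorphism $H^1(X,\mathbb Z)\,\stackrel{\sim}{\longrightarrow}\,H^1(S^d(X),\mathbb Z)$ (Macdonald) — together with the assertion that the morphism $f$ of \eqref{e1} does not alter the Albanese variety. Granting the first fact and using the universal property of the Albanese morphism, one obtains a morphism $h\,:\,{\rm Alb}({\mathcal Q})\,\longrightarrow\,{\rm Pic}^d(X)$ with $\varphi\circ f\,=\,h\circ{\rm alb}_{\mathcal Q}$ (for compatibly chosen base points), and the Corollary amounts to the statement that $h$ is an isomorphism. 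Since for every smooth projective variety $V$ the Albanese morphism induces an isomorphism $H_1(V,\mathbb Z)/(\text{torsion})\,\stackrel{\sim}{\longrightarrow}\,H_1({\rm Alb}(V),\mathbb Z)$, and a morphism between torsors under abelian varieties inducing an isomorphism on integral $H_1$ is itself an isomorphism, it is enough to prove that
$$
f_{*}\,:\,H_1({\mathcal Q},\mathbb Z)/(\text{torsion})\,\longrightarrow\,H_1(S^d(X),\mathbb Z)
$$
is an isomorphism.

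For this I would invoke Lemma \ref{lem1}. Both ${\mathcal Q}$ and $S^d(X)$ are smooth projective, and $f$ is proper and surjective, since each fibre $f^{-1}(z)$ is the image of the nonempty space $Y^z_d$. Now $Y^z_d$ is an iterated ${\mathbb C}{\mathbb P}^{r-1}$-bundle over the point $\{z\}$, hence it is connected and it is a rational variety; consequently $f^{-1}(z)$, being its image, is connected and rationally connected (the image of a rationally connected variety under a morphism is again rationally connected). Thus $f\,:\,{\mathcal Q}\,\to\,S^d(X)$ is a surjective morphism of smooth complex projective varieties all of whose fibres are connected and rationally connected, i.e.\ a rationally connected fibration. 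By the theorem that such a fibration induces an isomorphism on fundamental groups (Koll\'ar; see also Debarre, \textit{Higher-Dimensional Algebraic Geometry}), $\pi_1({\mathcal Q})\,\to\,\pi_1(S^d(X))$ is an isomorphism, and passing to abelianizations gives the desired isomorphism on $H_1$. One half of this is elementary and worth recording: the assignment $D\,\mapsto\,\bigl({\mathcal O}_X(-D)\oplus{\mathcal O}_X^{\oplus(r-1)}\,\hookrightarrow\,{\mathcal O}_X^{\oplus r}\bigr)$ defines a section of $f$, so $f^{*}$ is split injective on all cohomology; it is only the surjectivity of $f_*$ on $H_1$ that genuinely uses the rational connectedness of the fibres.

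Once $f_*$ is known to be an isomorphism on $H_1/(\text{torsion})$, the map $h$ is an isomorphism, and therefore $\varphi\circ f$ inherits the universal property of the Albanese morphism of ${\mathcal Q}$, as claimed.

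I expect the real difficulty to lie in the passage establishing that $f$ induces an isomorphism on $H_1/(\text{torsion})$. The ``holomorphic $1$-form'' portion — that $f^{*}$ is an isomorphism on $H^0(\,\cdot\,,\Omega^1)$, so that $h$ is at least an isogeny — is soft. Upgrading ``isogeny'' to ``isomorphism'' is where one really needs that \emph{every} fibre of $f$, including the fibres over the big diagonal of $S^d(X)$, where $f^{-1}(z)$ may be singular, is rationally connected; controlling these possibly singular fibres is exactly what Lemma \ref{lem1} is designed to do, by exhibiting each of them as the image of an explicit smooth rational variety. One should also verify that the cited fundamental-group theorem for rationally connected fibrations is applied under hypotheses that are met here: properness, connectedness of the fibres, and normality (indeed, smoothness) of the base.
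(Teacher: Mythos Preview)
Your argument is correct, but the paper's route is considerably shorter and avoids Koll\'ar's $\pi_1$ theorem altogether. The paper simply observes that by Lemma \ref{lem1} every fibre of $f$ is swept out by rational curves, and every fibre of $\varphi$ is a projective space ${\mathbb P}H^0(X,L)^*$; since there is no nonconstant map from a rational curve to an abelian variety, the Albanese morphism ${\rm alb}_{\mathcal Q}$ is constant on the fibres of $\varphi\circ f$ and hence factors through it. Because ${\rm Pic}^d(X)$ is itself an abelian torsor, the universal property of the Albanese gives a factorisation the other way as well, and the two composites are the identity (the images generate). That is the whole proof: the only inputs beyond Lemma \ref{lem1} are that $f$ is surjective with connected fibres onto the smooth base $S^d(X)$, and that $\varphi$ is the Albanese map of $S^d(X)$.

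Your approach instead proves the stronger statement that $f_*$ is an isomorphism on $\pi_1$ (hence on $H_1/\text{torsion}$), which is interesting but costs more: you must control every fibre of $f$, including the singular ones over the big diagonal, and verify the hypotheses of the rationally-connected-fibration theorem. Your observation that $f$ has a section handles surjectivity of $f_*$ painlessly; it is exactly the injectivity half where the heavier machinery enters. The paper sidesteps this entirely because it never needs to compare $\pi_1$ or $H_1$ directly --- it only needs that rational curves die in abelian varieties, which is elementary.
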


\begin{proof}
Since there is no nonconstant morphism from a rational curve to an
abelian variety, from Lemma \ref{lem1} it follows immediately that the
Albanese morphism for ${\mathcal Q}$ factors through $f$: Take any holomorphic 
line bundle $L\, \in\, {\rm Pic}^d(X)$.
Note that the fiber $\varphi^{-1}(L)$ is the
projective space ${\mathbb P}H^0(X,\, L)^*$. Therefore, the
Albanese morphism for ${\mathcal Q}$ factors through $\varphi\circ f$.
\end{proof}

\section{Holomorphic bisectional curvatures}

We continue with the notation of Section \ref{se2}.

\begin{theorem}\label{thm1}
The complex manifold $\mathcal Q$ does not admit any K\"ahler metric such that
all the holomorphic bisectional curvatures are nonnegative.
\end{theorem}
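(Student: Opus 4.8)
The plan is to argue by contradiction, combining the structure theory for compact K\"ahler manifolds of nonnegative holomorphic bisectional curvature with the explicit description of the Albanese fibration furnished by Lemma \ref{lem1} and Corollary \ref{cor1}. So suppose $\mathcal Q$ carries a K\"ahler metric all of whose holomorphic bisectional curvatures are nonnegative. By the results of Howard--Smyth--Wu and Mok, the Albanese morphism is then a holomorphic submersion which is in fact a holomorphic fibre bundle, and passing to the universal cover $\mathcal Q$ is realized as a \emph{flat} fibre bundle over its Albanese torus: the universal cover splits holomorphically as $\mathbb C^{g}\times N$ with $N$ a compact simply connected fibre, the deck group acting by translations on the first factor and by holomorphic automorphisms of $N$ on the second. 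By Corollary \ref{cor1} this Albanese fibration is exactly $\varphi\circ f$; the fibres $f^{-1}(|L|)$ are simply connected because, by Lemma \ref{lem1}, they are dominated by iterated ${\mathbb C}{\mathbb P}^{r-1}$-bundles and hence rationally connected, so that $\pi_{1}(\mathcal Q)=\pi_{1}(\mathrm{Pic}^{d}(X))=\mathbb Z^{2g}$, the flat factor has dimension $g$, and $\mathrm{Alb}(\mathcal Q)=\mathrm{Pic}^{d}(X)$.

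The heart of the matter is to show that this fibration is \emph{not} flat. When $d\le 2(g-1)$ it is not even a submersion, by the Brill--Noether jump in $h^{0}$, which is the argument of \cite{BR} and \cite{BS}; so the essentially new case is $d>2(g-1)$, which I would treat first for $r=1$. Here $\mathcal Q=S^{d}(X)=\mathbb P(E)$ is the projectivization of the rank $d-g+1$ bundle $E=\pi_{*}\mathcal P$ on $\mathrm{Pic}^{d}(X)$, where $\mathcal P$ is a Poincar\'e bundle on $X\times\mathrm{Pic}^{d}(X)$ and $\pi$ the projection. A projective bundle is flat precisely when $E$ is projectively flat, and this forces $\mathrm{ch}_{2}(\mathrm{End}\,E)=(m-1)c_{1}(E)^{2}-2m\,c_{2}(E)=0$ in $H^{4}(\mathrm{Pic}^{d}(X),\mathbb Q)$, where $m=d-g+1$. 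A Grothendieck--Riemann--Roch computation gives $\mathrm{ch}(E)=(d-g+1)-\theta$, whence $c_{1}(E)=-\theta$ and $c_{2}(E)=\theta^{2}/2$ with $\theta$ the class of the theta divisor; substituting yields $\mathrm{ch}_{2}(\mathrm{End}\,E)=-\theta^{2}$. Since $g\ge 2$ we have $\theta^{2}\neq 0$, so $E$ is not projectively flat and $S^{d}(X)\to\mathrm{Pic}^{d}(X)$ is not a flat projective bundle, contradicting the splitting above. This obstruction moreover survives every finite \'etale cover, since such a cover is pulled back from an isogeny of $\mathrm{Pic}^{d}(X)$ and an isogeny acts injectively on rational cohomology, so $\theta^{2}$ stays nonzero.

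For general $r$ I would reduce to the previous computation through the forgetful morphism $f\colon\mathcal Q\to S^{d}(X)$, which is a morphism over $\mathrm{Pic}^{d}(X)$ restricting on each Albanese fibre to the canonical surjection $N\to|L|=\mathbb P^{\,d-g}$. Lifting everything to the universal cover $\mathbb C^{g}$ and using the equivariance of $f$ under the deck action, the flat structure on $\mathcal Q$ should descend to a flat structure on $S^{d}(X)\to\mathrm{Pic}^{d}(X)$, reducing the general case to the non-flatness of $\mathbb P(E)$ just established.

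I expect the main obstacle to be exactly this last descent, together with the need to invoke the structure theorem in the sharp (flat-bundle) form used above rather than the bare submersion statement. The fibre map $N\to\mathbb P^{\,d-g}$ is highly non-injective, so transporting the locally constant transition automorphisms of $N$ to locally constant elements of $\mathrm{PGL}(d-g+1)$ requires genuine care: one must check that in compatible trivializations the fibrewise map does not vary along the base, or else replace this step by a direct characteristic-class computation carried out on $\mathcal Q$ itself. By comparison, the remaining ingredients---identifying the Albanese map via Corollary \ref{cor1}, the simple connectivity of the fibres from Lemma \ref{lem1}, and the Riemann--Roch evaluation of $c_{1}(E)$ and $c_{2}(E)$---are routine.
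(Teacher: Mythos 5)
Your first half runs parallel to the paper: Mok's uniformization, the splitting of the universal cover into a Euclidean factor times a compact factor, and the identification of the resulting flat fibration with the Albanese map $\varphi\circ f$ via Lemma \ref{lem1} and Corollary \ref{cor1} (this identification is exactly the paper's Proposition inside the proof of the theorem). Your endgame for $r=1$, $d\geq 2g-1$ is genuinely different from the paper's and, as far as I can see, correct: $S^d(X)=\mathbb P(E)$ with $\mathrm{ch}(E)=(d-g+1)-\theta$, flatness of the $\mathrm{PGL}(d-g+1)$-bundle would kill the degree-four characteristic class $c_2(\mathrm{End}\,E)=2mc_2(E)-(m-1)c_1(E)^2=\theta^2$ by Chern--Weil, and $\theta^2\neq 0$ since $g\geq 2$. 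That is a clean, purely cohomological contradiction which the paper does not use.

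The genuine gap is the one you yourself flag: the reduction of general $r$ to $r=1$. Nothing forces the flat structure on $\beta\,:\,\mathcal Q\to \mathrm{Pic}^d(X)$ to descend through $f$. Parallel transport for the flat connection is just some biholomorphism of the Mok fiber $N=(\varphi\circ f)^{-1}(L)$, and there is no reason it should preserve the fibers of $f\vert_N\,:\,N\to |L|$. Indeed $N$ is rationally connected (its image $|L|$ is a projective space and its $f$-fibers are rationally connected by Lemma \ref{lem1}, so Graber--Harris--Starr applies), hence $f\vert_N$ is not an intrinsic fibration of $N$ (it is not, say, an MRC quotient) and automorphisms of $N$ need not respect it; the alternative you mention, a characteristic-class computation carried out on $\mathcal Q$ itself, is not routine either, because the vertical tangent bundle of a flat $N$-bundle is not a flat-associated vector bundle. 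So for $r\geq 2$ your argument does not close. The paper avoids this entirely with a different final step, and this is the ingredient you are missing: parallel transport of the flat connection along the translation paths $\tau_{tv}$ of the abelian variety $Y$ produces elements of $\mathrm{Aut}^0(\mathcal Q)$ with no fixed points, whereas by \cite[Theorem 1.1]{BDH} one has $\mathrm{Aut}^0(\mathcal Q)=\mathrm{PGL}(r,\mathbb C)$, and every element of $\mathrm{PGL}(r,\mathbb C)$ fixes a point of $\mathcal Q$ (take an invariant hyperplane of $\mathbb C^r$ and an invariant twist to build an invariant quotient). That argument is uniform in $r$ and requires no descent through $f$; without it, or a genuinely new idea replacing it, your proposal proves the theorem only for $r=1$.
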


\begin{proof}
Assume that $\mathcal Q$ admits a K\"ahler metric $g_Q$ with nonnegative
holomorphic bisectional curvatures. Let $\omega$ denote the K\"ahler form of $g_Q$.

Let
\begin{equation}\label{rho}
\rho\, :\, \widetilde{\mathcal Q}\,\longrightarrow\, \mathcal Q
\end{equation}
be the universal cover. Then, by Mok's uniformization theorem for compact K\"ahler manifolds
with nonnegative bisectional curvature \cite[p. 179, Main Theorem]{mok}, the K\"ahler
manifold $(\widetilde{\mathcal Q} \, , \rho^\ast g_Q)$ decomposes as a Cartesian product
$$
(\widetilde{\mathcal Q}\, , \rho^\ast g) \,=\, ({\mathbb C}^N\, , g_0) \times 
(\prod_{i=1}^k({\mathbb C} P^{r_i}\, , g_i)) \times (\prod_{j=1}^\ell( (H_j\, ,h_j))
$$
of K\"ahler manifolds for some nonnegative integers $r_1\, ,\cdots \, ,r_k, , \ell$; here 
$({\mathbb C}^N\, , g_0)$ is the Euclidean space with its standard complex structure and 
metric, $g_i$ are K\"ahler metrics of nonnegative bisectional curvature on
${\mathbb C} P^{r_i}$, and $h_j$ are the standard symmetric K\"ahler metrics on certain 
Hermitian symmetric spaces $H_j$ of compact type. In what follows we do not need the 
specific information on the non-Euclidean factors in the decomposition.
We write 
$$
(\widetilde{\mathcal Q}\, , \rho^\ast g_Q) \,=\, ({\mathbb C}^N\, , g_0) \times (H\, , g')\, ,
$$
where $(H,g')\,=\, (\prod_{i=1}^k({\mathbb C} P^{r_i}\, , g_i)) \times (\prod_{j=1}^\ell( 
(H_j\, ,h_j))$. 

Let
$$
q_1\, :\, {\mathbb C}^N\times H\, \longrightarrow\, {\mathbb C}^N
$$
be the projection to the first factor. Let
$$
\gamma\, \in\, \text{Gal}(\rho)
$$
be a deck transformation for the covering $\rho$ in \eqref{rho}. Since there is no nonconstant
holomorphic map from $H$ to ${\mathbb C}^N$, there is a holomorphic isometry
$\gamma'\, \in\, \text{Aut}({\mathbb C}^N)$ such that
$$
q_1\circ \gamma\,=\, \gamma'\circ q_1\, .
$$
Therefore, $q_1$ descends to a holomorphic submersion of the form
\begin{equation}\label{beta}
\beta\, :\, {\mathcal Q}\, \longrightarrow\, Y
\end{equation}
such that $Y$ is (regularly) covered by ${\mathbb C}^N$; so
${\mathbb C}^N$ is a universal cover of $Y$. More precisely,
there is a K\"ahler form preserving covering map
$$
\rho'\, :\, {\mathbb C}^N\,\longrightarrow\, Y
$$
such that $\rho'\circ q_1\,=\, \beta\circ\rho$. We note that $\beta$ makes
$\mathcal Q$ a holomorphic fiber bundle over $Y$ with fibers isomorphic to $H$.

Consider the $C^\infty$ distribution
$$
{\mathcal S}\subset\, T\mathcal Q
$$
on $\mathcal Q$ obtained by taking the
orthogonal complement, with respect to $\omega$, of the relative tangent bundle
for the projection $\beta$ in \eqref{beta}. The pulled back distribution
$$
\rho^\star{\mathcal S}\,=\, (d\rho)^{-1}({\mathcal S})\,\subset\, T\widetilde{\mathcal Q}
$$
coincides with the relative tangent bundle for the projection $q_2\, :\,
{\mathbb C}^N\times H\,\longrightarrow\, H$ to the second factor; here $d\rho\, :\,
T\widetilde{\mathcal Q}\,\longrightarrow\, T{\mathcal Q}$ is the differential
of $\rho$. In particular,
$\rho^\star{\mathcal S}$ is an integrable holomorphic distribution. In
other words, $\rho^\star{\mathcal S}$ defines a flat holomorphic connection on the
fiber bundle
$$
q_1\, :\, {\mathbb C}^N\times H\, \longrightarrow\, {\mathbb C}^N\, .
$$
This implies
that ${\mathcal S}$ is a flat holomorphic connection on the fiber bundle
$\beta\, :\, {\mathcal Q}\, \longrightarrow\, Y$. Let
\begin{equation}\label{f1}
\nabla
\end{equation}
denote this flat holomorphic connection on $\beta\, :\, {\mathcal Q}\,
\longrightarrow\, Y$.

We now need the following proposition.

\begin{proposition}\label{prop1}
The fibers of the map $\beta$ coincides with the fibers of the
map $\varphi\circ f$. In particular, there is a biholomorphism
$$
\alpha\, :\, Y\, \longrightarrow\, {\rm image}(\varphi\circ f)
$$
such that $\alpha\circ \beta\,=\, \varphi\circ f$.
\end{proposition}

\begin{proof}[Proof of Proposition \ref{prop1}]
Take a point $y\, \in\, Y$. Consider the restriction of the map
$\varphi\circ f$ to $\beta^{-1}(y)$. Since $\beta^{-1}(y)\,=\, H$
is simply connected, this map
$$
(\varphi\circ f)\vert_{\beta^{-1}(y)}\, :\, \beta^{-1}(y)\, \longrightarrow
\, \text{Pic}^d(X)
$$
lifts to a map
$$
\beta^{-1}(y)\, \longrightarrow \, \widetilde{\text{Pic}}^d(X)\, ,
$$
where $\widetilde{\text{Pic}}^d(X)$ is the universal cover of $\text{Pic}^d(X)$.
But $\widetilde{\text{Pic}}^d(X)$ is isomorphic to ${\mathbb C}^{^g}$,
and hence any holomorphic map to it from $\beta^{-1}(y)$ is a constant map.

Conversely, take any point $L\, \in\, \varphi\circ f(\mathcal Q)$. We will
show that the restriction of the map $\beta$ to the fiber $(\varphi\circ
f)^{-1}(L)$ is a constant map. For this we first show that there is no nonconstant
holomorphic map from ${\mathbb C}{\mathbb P}^1$ to $Y$. To prove this note that
${\mathbb C}{\mathbb P}^1$ is simply connected and the universal over of $Y$ is
${\mathbb C}^{N}$. Therefore, any holomorphic map from ${\mathbb C}{\mathbb P}^1$ to $Y$
lifts to a holomorphic map from ${\mathbb C}{\mathbb P}^1$ to ${\mathbb C}^{N}$, but there
are no such nonconstant holomorphic maps.

Since there is no nonconstant holomorphic map from ${\mathbb C}{\mathbb P}^1$
to $Y$, from Lemma \ref{lem1} it follows that the map
$$
\beta\vert_{(\varphi\circ f)^{-1}(L)}\, :\,
(\varphi\circ f)^{-1}(L)\,\longrightarrow\, Y
$$
factors through $ (\varphi\circ f)^{-1}(L)\, \stackrel{f}{\longrightarrow}
\, \varphi^{-1}(L)$. As noted in the proof of Corollary \ref{cor1}, the fiber
$\varphi^{-1}(L)$ is a projective space. Hence
the restriction of $\beta$ to the fiber $(\varphi\circ f)^{-1}(L)$
is a constant map. This completes the proof of the proposition.
\end{proof}

Continuing with the proof of the theorem, since $g\, \geq\, 2$, we know 
that $\mathcal Q (r,d)$ does not admit any K\"ahler metric with nonnegative holomorphic
bisectional curvature if $d \, <\, g$ \cite[Corollary 4.1]{BS}.
Therefore, we assume that $d\, \geq\, g$. Consequently, the map $\varphi$ is surjective.
Since $f$ is surjective, this implies that $\varphi\circ f$ is surjective.
Now from Proposition \ref{prop1} we know that $Y$ is an abelian variety.

Choose a point of $Y$ to make it a group. The universal covering map
${\mathbb C}^N\,\longrightarrow\, Y$ is chosen to be a homomorphism of groups.
For any point $v$ in the
universal cover ${\mathbb C}^N$ of $Y$, let
$$
\tau_v\, :\, Y\, \longrightarrow\, Y
$$ be the automorphism given by the automorphism $w\, \longmapsto\, w+v$ of ${\mathbb C}^N$.
We have a one-parameter family of automorphisms $Y$ given by $\tau_{tv}$, $t\,
\in\, [0\, ,1]$, that connects $\tau_v$ with the identity map of $Y$.

Consider the flat holomorphic connection $\nabla$ in \eqref{f1}.
Given a point $y\, \in\, Y$, we may take parallel translation of the fiber of $\beta$
over $y$ along the path $\tau_{tv}(y)$, $t\, \in\, [0\, ,1]$.
Taking parallel translations for $\nabla$ along such paths
$\tau_{tv}$, $t\, \in\, [0\, ,1]$, we get holomorphic automorphisms of
$\mathcal Q$. If $v$ is sufficiently close to zero, then the automorphism
$\tau_v$ of $Y$ clearly does not have any fixed-point. Hence for $v$ sufficiently
close to zero, we get
holomorphic automorphisms of $\mathcal Q$ that do not have any fixed point.

Let $\text{Aut}(\mathcal Q)$ denote the group of holomorphic automorphisms
of $\mathcal Q$. Let $\text{Aut}^0(\mathcal Q)\, \subset\,\text{Aut}(\mathcal
Q)$ be the connected component of it containing the identity element.

The standard action of $\text{GL}(r,{\mathbb C})$ on ${\mathbb C}^r$ produces
an action of $\text{GL}(r,{\mathbb C})$ on the trivial vector bundle
${\mathcal O}^{\oplus r}_X\,=\, X\times {\mathbb C}^r$. The corresponding
action of $\text{GL}(r,{\mathbb C})$ on $\mathcal Q$ factors through the quotient
group $\text{PGL}(r,{\mathbb C})\,=\, \text{GL}(r,{\mathbb C})/{\mathbb C}^*$. Since
this action of $\text{PGL}(r, {\mathbb C})$ on $\mathcal Q$ is effective, we have
$$
\text{PGL}(r,{\mathbb C})\, \subset\, \text{Aut}^0(\mathcal Q)\, .
$$
It is known that $\text{PGL}(r,{\mathbb C})\, =\, \text{Aut}^0(\mathcal Q)$
\cite[Theorem 1.1]{BDH}. Since the standard action on ${\mathbb C}{\mathbb
P}^{r-1}$ of any element $A\, \in\, \text{PGL}(r,{\mathbb C})$ has a fixed
point, the action of $A$ on $\mathcal Q$ has a fixed point.

This contradicts our earlier construction of automorphisms of
$\mathcal Q$ without fixed points. In view of this contradiction, we conclude
that $\mathcal Q$ does not admit any K\"ahler structure such that all the
holomorphic bisectional curvatures are nonnegative.
\end{proof}


\end{document}